\documentclass[a4paper,11pt]{article}

\usepackage[a4paper,margin=1in]{geometry}

\usepackage{url}

\usepackage{amsmath,amssymb,amsthm}

\usepackage{booktabs}
\usepackage{hyperref}

\usepackage{longtable}

\newtheorem{theorem}{Theorem}
\newtheorem{proposition}{Proposition}
\newtheorem{remark}{Remark}
\newtheorem{lemma}{Lemma}

\usepackage{graphicx}

\author{Xuefeng Liu\footnote{Tokyo Woman's Christian University; E-mail: xfliu@lab.twcu.ac.jp}}
\usepackage{pict2e}
\usepackage{tikz}
\usetikzlibrary{calc}

\usepackage[capitalise]{cleveref}

\usepackage{type1cm}         
\usepackage{bm}

\title{Rigorous Eigenvalue Bounds for Schr\"odinger Operators with Confining Potentials on $\mathbb{R}^2$}

\begin{document}

\maketitle

\abstract{
We propose a rigorous method for computing two-sided eigenvalue
bounds of the Schr\"odinger operator $H=-\Delta+V$ with a
confining potential on $\mathbb{R}^2$.
The method applies a domain truncation to a finite disk $D(R)$, reducing the original problem on $\mathbb{R}^2$ to a Neumann boundary eigenvalue problem on $D(R)$.
Under the confinement condition $\sigma(R):=\inf_{x\notin D(R)}V(x)>\lambda_k$, the truncation error is controlled by an explicit Dirichlet--Neumann bracketing argument that removes the deteriorating exterior factor of the classical mass-truncation approach.
For the restricted eigenvalue problem on $D(R)$, Liu's Composite Enriched Crouzeix--Raviart (CECR) finite element method plays a central role, providing a rigorous lower bound for the Neumann eigenvalue from a single FEM solve with a piecewise-constant lower potential $\overline{V}\le V$.
Two concrete potentials are studied: the radially symmetric
ring potential $V_1(x)=(|x|^2-1)^2$ and the Cartesian
double-well $V_2(x)=(x_1^2-1)^2+x_2^2$.
To the author's knowledge, this paper reports the first rigorous eigenvalue bounds for Schr\"odinger operators on an unbounded domain.
}

\section{Introduction}

The Schr\"odinger operator $H = -\Delta + V$ on $\mathbb{R}^2$
with a confining potential, i.e., $|V|\rightarrow \infty$ as $|x|\rightarrow \infty$, has discrete spectrum
$0<\lambda_1\le\lambda_2\le\cdots\to\infty$.
Computing \emph{rigorous} two-sided bounds for $\lambda_k$ is
important for quantum chemistry, validated numerics, and spectral
geometry. There have been various numerical results on eigenvalue approximations via Weinstein's method.
However, it is generally difficult to certify that a computed eigenvalue enclosure corresponds
to the ground state, i.e., the first eigenvalue of the Schr\"odinger operator.
This paper aims to provide rigorous eigenvalue bounds for the leading eigenvalues of the
Schr\"odinger operator on unbounded domains.

We study two canonical confining potentials with parameter $a>0$:
\begin{itemize}\setlength\itemsep{1pt}
  \item $V_1(x)=\bigl(|x|^2-a^2\bigr)^2$: radially symmetric
    ``ring'' potential, minimum $V_1=0$ on the circle $|x|=a$;
  \item $V_2(x)=(x_1^2-a^2)^2+x_2^2$: Cartesian double-well,
    minima at $(\pm a,0)$.
\end{itemize}
These are representative of potentials arising in quantum tunneling
models and double-well quantum mechanics.
A key structural difference is that $V_2$ is \emph{separable}:
$H_{V_2}=H_{x_1}+H_{x_2}$ where $H_{x_2}=-\partial^2/\partial
x_2^2+x_2^2$ has exact eigenvalues $\{2n+1\}_{n\ge0}$.
This gives an analytical eigenvalue structure for $V_2$
(Section~\ref{sec:problem}).

For $V_1$ the $O(2)$ symmetry produces degenerate eigenvalue
pairs.  Both features are confirmed by the rigorous bounds.

Guaranteed lower bounds for the Schr\"odinger operator on bounded domains
are given by applying the Composite Enriched Crouzeix--Raviart (CECR) FEM proposed in the author's book \cite{liu2024book}, which is an extension of
the early work in \cite{liu2015}.
For the truncation to the bounded domain $D(R)$ we apply a Dirichlet--Neumann bracketing argument
(cf.~\cite{reed-simon1978}, Ch.~XIII): under the explicit confinement condition
$\sigma(R):=\inf_{x\notin D(R)}V(x)>\lambda_k$,
the truncated Neumann eigenvalue $\mu_k^{R,N}(V)$ satisfies $\mu_k^{R,N}(V)\le\lambda_k$ directly,
without any deteriorating exterior correction factor of the form $1-\Lambda_k/\sigma(R)$
that arises in the classical mass-truncation approach.
Compared with Agmon's exponential decay theory \cite{agmon1982},
all quantities in our estimate can be computed explicitly,
and hence rigorous two-sided eigenvalue bounds become possible.

\section{Problem Setting}\label{sec:problem}

\paragraph{General Framework}

Let $V:\mathbb{R}^2\to[0,\infty)$ be continuous with $V\to\infty$
as $|x|\to\infty$.  We seek eigenpairs $(u,\lambda)$ with $u\in
H^1(\mathbb{R}^2)$, $\|u\|=1$, satisfying
\begin{equation}
  a(u,v)=\lambda\,b(u,v)\quad\forall v\in H^1(\mathbb{R}^2),
  \label{eq:weak}
\end{equation}
where 
\[
a(u,u) := \int_{\mathbb{R}^2} \bigl(|\nabla u|^2 + V|u|^2\bigr)\,dx,\:
b(u,v)=\int_{\mathbb{R}^2} uv\,dx.
\]
Since $V\ge0$ and $V\to\infty$, $H=-\Delta+V$ has compact
resolvent and discrete spectrum
$0<\lambda_1\le\lambda_2\le\cdots\to\infty$~\cite{reed-simon1978}.

\paragraph{Two Model Potentials}\label{sec:potentials}
For a potential $V$ and truncation radius $R$, define
\begin{equation}
  \sigma(R) := \inf_{|x|\ge R}V(x).
  \label{eq:sigma}
\end{equation}
This quantity governs the Neumann exterior correction in
Section~\ref{sec:method}.  Closed forms are available for both
model potentials below.

\paragraph{$V_1(x)=(|x|^2-a^2)^2$.}
Radially symmetric with minimum zero on $|x|=a$.
In polar coordinates $(r,\theta)$, eigenfunctions decompose as
$u(r,\theta)=f(r)e^{im\theta}$, where $m\in\mathbb{Z}$ is the
angular momentum quantum number.
The $O(2)$ symmetry forces eigenvalues with $|m|\ge1$ to be
degenerate pairs ($m$ and $-m$ modes share the same radial equation).
For $|x|\ge R$ with $R>a$: $V_1(x)=(|x|^2-a^2)^2\ge(R^2-a^2)^2$, so
\begin{equation}
  \sigma_1(R) := \inf_{|x|\ge R}V_1 = (R^2-a^2)^2.
  \label{eq:sigma1}
\end{equation}

\paragraph{$V_2(x)=(x_1^2-a^2)^2+x_2^2$.}
Cartesian double-well with minima at $(\pm a,0)$.
\emph{Separability}: $H_{V_2}=H_{x_1}+H_{x_2}$ where
$H_{x_j}=-\partial_{x_j}^2+V_{x_j}$,
$V_{x_1}=(x_1^2-a^2)^2$, $V_{x_2}=x_2^2$.
The $H_{x_2}$ eigenvalues are \emph{exact}: $\mu_n=2n+1$,
$n=0,1,2,\ldots$ Hence the 2D eigenvalues satisfy
\begin{equation}
  \lambda_{m,n}^{V_2} = \lambda_m^{(x_1)} + (2n+1),
  \label{eq:sep}
\end{equation}
where $\lambda_m^{(x_1)}$ are eigenvalues of the 1D double-well
operator $H_{x_1}=-\partial_{x_1}^2+(x_1^2-a^2)^2$.
No closed-form formula is known for $\lambda_m^{(x_1)}$; they must
be determined numerically.  The symmetric double-well has two
minima at $x_1=\pm a$ with barrier height $V(0)=a^4$; for large
$a$ the ground state pair becomes exponentially close (quantum
tunneling), while for $a=1$ the splitting is numerically
significant.
For $R>\sqrt{a^2+\frac{1}{2}}$, the minimum of $V_2$ on the
circle $\{|x|=r\}$ is $r^2-a^2-\tfrac{1}{4}$, attained at
$x_1^2=a^2+\tfrac{1}{2}$.  Since this is strictly increasing in $r$,
the infimum over the exterior $\{|x|\ge R\}$ is attained on the
boundary circle, giving
\begin{equation}
  \sigma_2(R) := \inf_{|x|\ge R}V_2 = R^2-a^2-\tfrac{1}{4}.
  \label{eq:sigma2}
\end{equation}

\section{Methodology}\label{sec:method}

\subsection{Dirichlet Upper Bounds}

Restricting~\eqref{eq:weak} to $H_0^1(D(R))$,
$D(R):=\{|x|\le R\}$, gives truncated Dirichlet eigenvalues $\lambda_k^R$.
\begin{proposition}[Upper bound]\label{prop:ub}
  $\lambda_k\le\lambda_k^R$ for all $k\ge1$.
\end{proposition}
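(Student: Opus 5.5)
The inequality follows from the Courant--Fischer min--max principle together with the observation that $H_0^1(D(R))$ embeds into $H^1(\mathbb{R}^2)$ by extension by zero. Let $\widetilde{v}$ denote the zero extension of $v\in H_0^1(D(R))$ to $\mathbb{R}^2$. Then $\widetilde{v}\in H^1(\mathbb{R}^2)$ with $\nabla\widetilde{v}=\widetilde{\nabla v}$ almost everywhere, a standard fact for $H_0^1$ functions on Lipschitz (here smooth) domains. Consequently
\[
a(\widetilde{v},\widetilde{v})=\int_{D(R)}\bigl(|\nabla v|^2+V|v|^2\bigr)\,dx,\qquad b(\widetilde{v},\widetilde{v})=\int_{D(R)}|v|^2\,dx .
\]
So the extension is an isometry for both quadratic forms, and the Rayleigh quotient is preserved. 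The finiteness of $\int V|\widetilde v|^2$ is automatic, because $V$ is continuous and hence bounded on the compact set $D(R)$.

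Next we write both eigenvalue sequences in min--max form. Since $H$ has compact resolvent and $a$ is a closed, semibounded form on its form domain $\mathcal{Q}=\{u\in H^1(\mathbb{R}^2):\int V|u|^2<\infty\}$, we have
\[
\lambda_k=\min_{\substack{S\subset\mathcal{Q}\\ \dim S=k}}\ \max_{0\ne u\in S}\frac{a(u,u)}{b(u,u)} .
\]
In the same way, $\lambda_k^R$ is given by the identical formula with $\mathcal{Q}$ replaced by $H_0^1(D(R))$. The truncated problem has compact resolvent by Rellich's theorem on the bounded disk.

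Let $S_k^R\subset H_0^1(D(R))$ be the span of the first $k$ Dirichlet eigenfunctions. Its zero extension $\widetilde{S_k^R}$ is a $k$-dimensional subspace of $\mathcal{Q}$, since extension by zero is injective. Using it as a trial space in the min--max formula for $\lambda_k$ gives
\[
\lambda_k\le\max_{0\ne u\in\widetilde{S_k^R}}\frac{a(u,u)}{b(u,u)}=\max_{0\ne v\in S_k^R}\frac{\int_{D(R)}(|\nabla v|^2+V v^2)}{\int_{D(R)}v^2}=\lambda_k^R .
\]
This proves the proposition.

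The only step requiring care is identifying the correct form domain. The weak formulation \eqref{eq:weak} is posed on $H^1(\mathbb{R}^2)$, but the min--max characterization must use the form domain $\mathcal{Q}$ on which $a$ is finite. We need the zero extensions to lie in $\mathcal{Q}$, and this holds trivially because they have compact support.
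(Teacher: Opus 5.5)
Your proof is correct and follows the paper's argument: zero extension embeds $H_0^1(D(R))$ into the form domain on $\mathbb{R}^2$ while preserving the Rayleigh quotient, and the min--max principle then gives $\lambda_k\le\lambda_k^R$. Your remark that the min--max should be taken over the form domain $\mathcal{Q}$ rather than all of $H^1(\mathbb{R}^2)$ is a useful clarification that the paper's one-line proof leaves implicit.
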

\begin{proof}
  Zero-extension embeds $H_0^1(D(R))\hookrightarrow H^1(\mathbb{R}^2)$;
  min-max gives $\lambda_k^R\ge\lambda_k$.
\end{proof}

\paragraph{Lagrange $P_1$ upper bounds: $\lambda_{k,h}^{R,\mathrm{P1}}$.}
For the discrete approximation we use the conforming Lagrange $P_1$
finite element method on $D(R)$ with homogeneous Dirichlet boundary conditions.
Since the $P_1$ space is a \emph{conforming} subspace of $H_0^1(D(R))$,
the min-max principle gives the rigorous discrete upper bound
$\lambda_k \le \lambda_k^R \le \lambda_{k,h}^{R,\mathrm{P1}}$.
To integrate the polynomial potential exactly, we represent $V|_K$
as a degree-4 Bernstein polynomial on each element $K$.
Since $V_1$ and $V_2$ are polynomials of degree $\le4$, this representation is exact and the potential mass integrals are computed analytically.

\begin{remark}[Lower bound using Lagrange $P_1$ FEM]\label{rem:lagrange-lb}
The conforming Lagrange $P_1$ eigenvalue $\lambda_{k,h}^{R,\mathrm{P1}}$ also yields
a lower bound for $\lambda_k^R$ via Theorem~\ref{thm:liu}.
Let $V_h$ be the conforming FEM space using $P_1$ element.  
Let $P_h:H_0^1(D(R))\to V_h$ be the Ritz projector with respect to inner product $a_{D(R)}(\cdot,\cdot)$ (see definition in \eqref{eq:aDR}). One can  apply the Lagrange interpolation error estimation and Aubin--Nitsche technique to obtain an estimation 
$$
\|u-P_hu\|_{L^2(D(R))}\le C_h \|u-P_hu\|_{a,D(R)}\:.
$$
\end{remark}

%

\subsection{Neumann Exterior Lower Bound}

The truncated Neumann eigenvalue problem on $D(R)$ is: find
$(u,\mu)\in H^1(D(R))\times\mathbb{R}$,
$\|u\|_{L^2(D(R))}=1$ s.t.
\begin{equation}
  a_{D(R)}(u,v)=\mu\,(u,v)_{D(R)}\quad\forall v\in H^1(D(R)),
  \label{eq:neumann-evp}
\end{equation}
where
\begin{equation}
  a_{D(R)}(u,v):=\int_{D(R)}\!\bigl(\nabla u\cdot\nabla v+V\,uv\bigr)\,dx.
  \label{eq:aDR}
\end{equation}
The trial space $H^1(D(R))$ imposes no condition on $\partial D(R)$,
so the natural boundary condition $\nabla u\cdot\mathbf{n}=0$ on
$\partial D(R)$ is satisfied weakly.
The eigenvalues can be characterized by the min--max principle:
\begin{equation}
  \mu_k^{R,N}=\min_{\substack{W\subset H^1(D(R))\\\dim W=k}}
  \max_{0\neq u\in W}
  \frac{a_{D(R)}(u,u)}{\|u\|_{L^2(D(R))}^2}.
  \label{eq:minmax-N}
\end{equation}

\begin{lemma}[Neumann truncation]\label{lem:neumann-lb}
Assume the confinement condition
$\sigma(R)>\lambda_k$, where $\sigma(R)$ is as
in~\eqref{eq:sigma}.  Then
\begin{equation}
\mu_k^{R,N}(V)\;\le\;\lambda_k.
\label{eq:neumann-lb}
\end{equation}
\end{lemma}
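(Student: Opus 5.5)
Restriction to $D(R)$ combined with min--max \eqref{eq:minmax-N}: the first $k$ eigenfunctions of the whole-space problem, restricted to $D(R)$, form an admissible test space. The confinement hypothesis guarantees that the restriction stays $k$-dimensional and that the Rayleigh quotient does not increase.

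Let $u_1,\dots,u_k$ be $b$-orthonormal eigenfunctions of \eqref{eq:weak} for $\lambda_1,\dots,\lambda_k$, and put $E=\mathrm{span}\{u_1,\dots,u_k\}$. For $u\in E$ we have $a(u,u)\le\lambda_k\|u\|^2$. Write $\Omega_R=\mathbb{R}^2\setminus D(R)$ and split both forms:
\[
a(u,u)=a_{D(R)}(u,u)+\int_{\Omega_R}\bigl(|\nabla u|^2+V u^2\bigr)\,dx,\qquad \|u\|^2=\|u\|_{D(R)}^2+\|u\|_{\Omega_R}^2 .
\]
Since $V\ge\sigma(R)$ on $\Omega_R$, the exterior part of $a$ is at least $\sigma(R)\|u\|_{\Omega_R}^2$. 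Substituting gives
\[
a_{D(R)}(u,u)+\sigma(R)\|u\|_{\Omega_R}^2\le \lambda_k\|u\|_{D(R)}^2+\lambda_k\|u\|_{\Omega_R}^2,
\]
and therefore
\[
a_{D(R)}(u,u)\le\lambda_k\|u\|_{D(R)}^2-(\sigma(R)-\lambda_k)\|u\|_{\Omega_R}^2\le\lambda_k\|u\|_{D(R)}^2 .
\]
The last inequality uses $\sigma(R)>\lambda_k$.

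Next, show that the restriction map $E\to H^1(D(R))$, $u\mapsto u|_{D(R)}$, is injective. If $u|_{D(R)}=0$ with $u\neq0$, the displayed inequality gives $0\le-(\sigma(R)-\lambda_k)\|u\|_{\Omega_R}^2$. Hence $u=0$ on $\Omega_R$ as well, so $u=0$, a contradiction. This is exactly where the strict inequality in the confinement condition is used. No unique-continuation argument is needed, which is the point of the lemma.

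Consequently $W=E|_{D(R)}$ is a $k$-dimensional subspace of $H^1(D(R))$. Every $0\ne w\in W$ satisfies $a_{D(R)}(w,w)/\|w\|_{D(R)}^2\le\lambda_k$, so \eqref{eq:minmax-N} gives $\mu_k^{R,N}\le\lambda_k$.

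The only delicate point is the dimension count, which the argument above settles. The remaining facts are routine: restrictions of $H^1(\mathbb{R}^2)$ functions lie in $H^1(D(R))$, and the eigenfunctions satisfy $a(u,u)<\infty$, so the exterior integrals are finite.
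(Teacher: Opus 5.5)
Your proof is correct and follows the paper's argument. You restrict the span of the first $k$ whole-space eigenfunctions to $D(R)$, use $V\ge\sigma(R)$ on the exterior to bound the Neumann Rayleigh quotient by $\lambda_k$, and invoke $\sigma(R)>\lambda_k$ for injectivity of the restriction. The only difference is cosmetic: you derive injectivity from the single inequality $a_{D(R)}(u,u)\le\lambda_k\|u\|_{L^2(D(R))}^2-(\sigma(R)-\lambda_k)\|u\|_{L^2(\mathbb{R}^2\setminus D(R))}^2$, whereas the paper proves injectivity by a separate estimate before bounding the quotient.
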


\begin{proof}
Let $\{u_j\}_{j=1}^{k}$ be orthonormal eigenfunctions of
$H=-\Delta+V$ on $\mathbb{R}^2$ with $Hu_j=\lambda_j u_j$,
and set $U_k:=\mathrm{span}\{u_1,\dots,u_k\}$.
We first verify that the restriction map
$T:U_k\to H^1(D(R)),\ Tu=u|_{D(R)}$
is injective.  Indeed, if $u\in U_k$ satisfies $u|_{D(R)}=0$, then
$u$ is supported on the exterior, and
\[
\lambda_k\|u\|_{L^2(\mathbb{R}^2)}^2
\;\ge\; a(u,u)
\;\ge\;
\int_{\mathbb{R}^2\setminus D(R)}\! V|u|^2\,dx
\;\ge\; \sigma(R)\|u\|_{L^2(\mathbb{R}^2)}^2,
\]
which forces $u=0$ because $\sigma(R)>\lambda_k$.
Consequently $W:=T(U_k)$ is a $k$-dimensional subspace of
$H^1(D(R))$ and is admissible in the Neumann min--max
formula~\eqref{eq:minmax-N}.

Let $v=u|_{D(R)}\in W$ with $u\in U_k$.  Splitting
$\mathbb{R}^2=D(R)\sqcup(\mathbb{R}^2\setminus D(R))$,
\begin{align*}
a_{D(R)}(v,v)
 &= a(u,u)-\!\!\int_{\mathbb{R}^2\setminus D(R)}\!\!\!
    \bigl(|\nabla u|^2+V|u|^2\bigr)\,dx\\
 &\le a(u,u)-\sigma(R)\|u\|_{L^2(\mathbb{R}^2\setminus D(R))}^2\\
 &= a(u,u)-\sigma(R)
    \bigl(\|u\|_{L^2(\mathbb{R}^2)}^2-\|v\|_{L^2(D(R))}^2\bigr)\\
 &\le (\lambda_k-\sigma(R))\|u\|_{L^2(\mathbb{R}^2)}^2
      +\sigma(R)\|v\|_{L^2(D(R))}^2\\
 &\le (\lambda_k-\sigma(R))\|v\|_{L^2(D(R))}^2
      +\sigma(R)\|v\|_{L^2(D(R))}^2\\
 &= \lambda_k\|v\|_{L^2(D(R))}^2,
\end{align*}
where the third line uses
$a(u,u)=\sum_{j=1}^k\lambda_j|c_j|^2\le
 \lambda_k\|u\|_{L^2(\mathbb{R}^2)}^2$ (writing
$u=\sum c_j u_j$ and using orthonormality), and the last
inequality uses $\lambda_k-\sigma(R)<0$ together with
$\|u\|_{L^2(\mathbb{R}^2)}^2\ge\|v\|_{L^2(D(R))}^2$.
Hence every Rayleigh quotient on $W$ is bounded above by
$\lambda_k$, and the min--max principle~\eqref{eq:minmax-N}
yields~\eqref{eq:neumann-lb}.
\end{proof}

\begin{remark}[Dirichlet--Neumann bracketing perspective]\label{rem:dn-bracketing}
Lemma~\ref{lem:neumann-lb} can be viewed as a quantitative
instance of Dirichlet--Neumann bracketing
(cf.~\cite{reed-simon1978}, Ch.~XIII): decompose
$\mathbb{R}^2=D(R)\sqcup(\mathbb{R}^2\setminus D(R))$ and place
Neumann conditions on the common interface $\partial D(R)$.
The $k$-th eigenvalue of the resulting direct-sum operator is
bounded above by $\lambda_k$, and on the exterior component the
Rayleigh quotient is pointwise bounded below by $\sigma(R)$.
Under the confinement condition $\sigma(R)>\lambda_k$, the first
$k$ eigenvalues of the direct-sum operator are all contributed by
the interior Neumann problem on $D(R)$, which yields exactly
$\mu_k^{R,N}(V)\le\lambda_k$ without any additional correction
factor.
\end{remark}

\begin{remark}[Comparison with the classical mass-truncation approach]\label{rem:mass-truncation}
A more traditional route to bounding the truncation error starts
from the exterior $L^2$ mass estimate
\[
\|u\|_{L^2(\mathbb{R}^2\setminus D(R))}^2
\;\le\;\frac{\Lambda_k}{\sigma(R)}\,\|u\|_{L^2(\mathbb{R}^2)}^2
\qquad(u\in U_k),
\]
which is obtained by comparing
$a(u,u)\le\Lambda_k\|u\|_{L^2(\mathbb{R}^2)}^2$ (with
$\Lambda_k\ge\lambda_k$) against the exterior lower bound
$a(u,u)\ge\sigma(R)\|u\|^2_{L^2(\mathbb{R}^2\setminus D(R))}$.
Using this mass bound as a denominator in the min--max
quotient leads to the inequality
\[
\lambda_k
\;\ge\;\mu_k^{R,N}(V)\!\left(1-\frac{\Lambda_k}{\sigma(R)}\right),
\]
i.e., a Neumann lower bound dressed with the deteriorating
correction factor $1-\Lambda_k/\sigma(R)$.  The cleaner
argument in the proof of Lemma~\ref{lem:neumann-lb} bypasses
the mass estimate entirely: by using the
Neumann bilinear form $a_{D(R)}$ on the restricted test space
$v=u|_{D(R)}$ directly, we cancel the exterior contribution
without ever dividing by
$\|v\|^2_{L^2(D(R))}/\|u\|^2_{L^2(\mathbb{R}^2)}$.
As a consequence the bound $\mu_k^{R,N}(V)\le\lambda_k$ is
factor-free and holds uniformly in $k$ as long as
$\sigma(R)>\lambda_k$.  This is the key structural
improvement over the mass-truncation technique and is what
makes the final two-sided bounds sharp enough to be reported
without any exterior correction.
\end{remark}

\subsection{CECR FEM and Lower Bound}\label{sec:cecr}

\paragraph{ECR element.}
Let us introduce the Enriched Crouzeix--Raviart (ECR) FEM in a concise way. 
Triangulate $D(R)$ with mesh $\mathcal{T}^h$ (max diameter
$h_{\max}$).  On each $K$, define the local ECR space
$U^{\mathrm{ECR}}(K):=\mathbb{P}^1(K)+\mathrm{span}\{|x|^2\}$
with 4 DOFs (3 edge averages, 1 cell average).
The global ECR space is
$$
  U_h^{\mathrm{ECR}}:=\bigl\{u_h\in L^2(D(R))\mid
  u_h|_K\in U^{\mathrm{ECR}}(K),\;
  $$
$$  \text{edge averages single-valued on interior edges}\bigr\},
$$
with zero edge averages on $\partial D(R)$ for Dirichlet conditions.
The ECR interpolation $\Pi_h^{\mathrm{ECR}}$ satisfies: for all
$v_h\in U^{\mathrm{ECR}}(K)$,
\begin{equation}
  \int_K\!\nabla(\Pi^{\mathrm{ECR}}u-u)\cdot\nabla v_h\,dx=0,
  \label{eq:ecr-orth}
\end{equation}
since $\partial v_h/\partial\mathbf{n}|_{F}$ and $\Delta v_h|_K$ are
constant on each facet $F$ and element $K$
respectively \cite{liu2024book}.

\medskip

\paragraph{CECR composite space.}
Let $\Pi_{0,h}$ be the piecewise-average operator and
$V_h$ be  a piecewise constant  potential.
In \cite[Sect. 4.1.2]{liu2024book}, it is defined that
\[
  \hat{U}_h:=\{(u_h,\Pi_{0,h}u_h)\mid u_h\in U_h^{\mathrm{ECR}}\}
\]
with  bilinear forms ($\hat{u}=\{u_1,u_2\}$,
$\hat{v}=\{v_1,v_2\}$):
\[
  \hat{a}(\hat{u},\hat{v}):=(\nabla u_1,\nabla v_1)+(V_h u_2,v_2),\quad
  \hat{b}(\hat{u},\hat{v}):=(u_1,v_1).
\]
The interpolation $\hat{\Pi}_h\{u,u\}:=\{\Pi_h^{\mathrm{ECR}}u,
\Pi_{0,h}u\}$ is $\hat{a}$-orthogonal: for any
$\hat{v}_h=\{v_h,\Pi_{0,h}v_h\}\in\hat{U}_h$,
\begin{align}
  \hat{a}(\hat{u}-\hat{\Pi}_h\hat{u},\hat{v}_h)
  &=\underbrace{(\nabla(I-\Pi_h^{\mathrm{ECR}})u,\nabla v_h)}_{=0\text{ by \eqref{eq:ecr-orth}}}
  \notag\\
  &\quad+\underbrace{(V_h(u-\Pi_{0,h}u),\Pi_{0,h}v_h)}_{=0\text{ by }
  \int_K(u-\Pi_{0,h}u)=0}=0.\label{eq:cecr-orth}
\end{align}

\paragraph{Interpolation error and $C_h$.}
Since $\|(I-\hat{\Pi}_h)\hat{u}\|_{\hat{b}}
=\|(I-\Pi_h^{\mathrm{ECR}})u\|$ and
$\|(I-\hat{\Pi}_h)\hat{u}\|_{\hat{a}}
\ge\|\nabla_h(I-\Pi_h^{\mathrm{ECR}})u\|$,
the ECR local estimate gives \cite{xie-liu-2018}
\begin{equation}
  C_h:=\max_K C^{\mathrm{ECR}}(K)\le0.1490\,h_{\max},
  \label{eq:Ch}
\end{equation}
and therefore $\|(I-\hat{\Pi}_h)\hat{u}\|_{\hat{b}}\le
C_h\|(I-\hat{\Pi}_h)\hat{u}\|_{\hat{a}}$ for all $u\in H^1(D(R))$.
The $\hat{a}$-orthogonality~\eqref{eq:cecr-orth} together with
this estimate satisfies the hypotheses of Liu's theorem.

\subsection{Liu's Lower Bound Theorem}

\begin{theorem}[Liu~{\cite{liu2015}}]\label{thm:liu}
  Let $\nu_{k,h}$ be the $k$-th CECR eigenvalue for an eigenvalue
  problem satisfying the $\hat{a}$-orthogonality~\eqref{eq:cecr-orth}
  and the interpolation estimate~\eqref{eq:Ch}, and let $\nu_k$ be the
  corresponding exact eigenvalue.  Then
  \begin{equation}
    \frac{\nu_{k,h}}{1+\nu_{k,h}C_h^2}\le\nu_k.
    \label{eq:liu}
  \end{equation}
\end{theorem}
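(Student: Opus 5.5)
The plan is the standard min--max argument behind Liu's theorem, carried out in the composite space. Let $\hat{V}:=\{\{u,u\}\mid u\in H^1(D(R))\}$ be the embedded continuous space. On $\hat V$ the forms reduce to $\hat a(\{u,u\},\{u,u\})=\|\nabla u\|^2+(V_hu,u)$ and $\hat b(\{u,u\},\{u,u\})=\|u\|^2$. Hence the exact eigenvalues $\nu_k$ are the min--max values of $\hat a/\hat b$ over $\hat V$. For the discrete problem, $\nu_{k,h}$ is the $k$-th min--max value over $\hat U_h$, where $\hat b$ is positive definite since $u_h\mapsto u_h$ is injective. I will work in the sum space $\hat V+\hat U_h$, on which $\hat a$ is positive semidefinite because $V_h\ge 0$. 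The inequality \eqref{eq:liu} is trivial when $\nu_{k,h}C_h^2$ is large relative to $\nu_k$, so the real task is the general comparison.

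\textbf{Step 1 (projection bound).} Take the $k$-dimensional space $E_k=\mathrm{span}\{\hat u_1,\dots,\hat u_k\}\subset\hat V$ spanned by exact eigenfunctions. For $\hat u\in E_k$ the Rayleigh quotient satisfies $\hat a(\hat u,\hat u)\le\nu_k\hat b(\hat u,\hat u)$. By the $\hat a$-orthogonality \eqref{eq:cecr-orth}, $\hat\Pi_h$ is the $\hat a$-orthogonal projection onto $\hat U_h$ (in the seminorm sense). This gives the Pythagorean identity
\[
\hat a(\hat u,\hat u)=\hat a(\hat\Pi_h\hat u,\hat\Pi_h\hat u)+\hat a(\hat u-\hat\Pi_h\hat u,\hat u-\hat\Pi_h\hat u).
\]

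\textbf{Step 2 (norm lower bound).} By the triangle inequality and \eqref{eq:Ch},
\[
\|\hat u\|_{\hat b}\le\|\hat\Pi_h\hat u\|_{\hat b}+C_h\|\hat u-\hat\Pi_h\hat u\|_{\hat a}.
\]
To make the estimate sharp I will instead use the discrete min--max in the form $\hat a(\hat w,\hat w)\ge\nu_{k,h}\hat b(\hat w,\hat w)$ for $\hat w$ $\hat b$-orthogonal to the first $k-1$ discrete eigenfunctions. Concretely, choose $\hat u\in E_k$ with $\hat\Pi_h\hat u$ $\hat b$-orthogonal to those $k-1$ discrete eigenfunctions and $\hat u\neq0$; this is possible by dimension counting. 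Set $\hat w=\hat\Pi_h\hat u$ and $\hat e=\hat u-\hat w$. Then $\|\hat u\|_{\hat b}\le\|\hat w\|_{\hat b}+C_h\|\hat e\|_{\hat a}$, while $\|\hat w\|_{\hat b}^2\le\nu_{k,h}^{-1}\|\hat w\|_{\hat a}^2$. Cauchy--Schwarz then gives
\[
\|\hat u\|_{\hat b}^2\le(\nu_{k,h}^{-1}+C_h^2)\bigl(\|\hat w\|_{\hat a}^2+\|\hat e\|_{\hat a}^2\bigr)=(\nu_{k,h}^{-1}+C_h^2)\,\hat a(\hat u,\hat u)\le(\nu_{k,h}^{-1}+C_h^2)\,\nu_k\|\hat u\|_{\hat b}^2.
\]
Dividing by $\|\hat u\|_{\hat b}^2>0$ yields $1\le\nu_k(\nu_{k,h}^{-1}+C_h^2)$, which rearranges to \eqref{eq:liu}.

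\textbf{Main obstacle.} The key point is the choice of $\hat u$ in Step 2. I need a nonzero $\hat u\in E_k$ whose projection avoids the span of the first $k-1$ discrete eigenfunctions. This is a linear constraint of codimension at most $k-1$ on the $k$-dimensional space $E_k$, so a nonzero solution exists. If $\hat w=0$, the bound still holds since then $\|\hat u\|_{\hat b}\le C_h\|\hat e\|_{\hat a}$. I must also check that $\hat\Pi_h$ is well defined on $\hat V$, which requires $\Pi_{0,h}$ to be applied to the second component. Finally, $\|\cdot\|_{\hat a}$ is only a seminorm, so I would verify that the Pythagorean identity uses only the orthogonality \eqref{eq:cecr-orth} and not definiteness.
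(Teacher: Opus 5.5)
Your proof is correct. The paper gives no proof of Theorem~\ref{thm:liu} and simply cites \cite{liu2015}, so the comparison is with the standard argument there.

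That argument uses the same ingredients you do: the exact eigenspace $E_k$, the Pythagorean identity from \eqref{eq:cecr-orth}, the triangle inequality with \eqref{eq:Ch}, and a two-term Cauchy--Schwarz. It applies the discrete min--max in its min--max form rather than its max--min form. First it disposes of the case where $\hat\Pi_h$ is not injective on $E_k$: a nonzero $\hat u\in E_k$ with $\hat\Pi_h\hat u=0$ gives $\nu_k\ge C_h^{-2}$ directly. Otherwise it uses the $k$-dimensional space $\hat\Pi_h E_k$ as a trial space, so $\nu_{k,h}\le\max_{\hat u\in E_k}\hat a(\hat\Pi_h\hat u,\hat\Pi_h\hat u)/\hat b(\hat\Pi_h\hat u,\hat\Pi_h\hat u)$, and runs the estimate for the maximizer.

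You instead pick $\hat u\in E_k$ with $\hat\Pi_h\hat u$ $\hat b$-orthogonal to the first $k-1$ discrete eigenvectors, which is the max--min (Courant--Fischer) characterization. This needs no injectivity of $\hat\Pi_h$ on $E_k$ and handles the degenerate case $\hat\Pi_h\hat u=0$ automatically, so the case split disappears. Both routes give exactly the same constant; yours is marginally more economical.

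Three small points to tidy:
\begin{itemize}
\item What you actually need from $\hat\Pi_h$ is $\hat\Pi_h\hat u\in\hat U_h$, so that the discrete Rayleigh bound applies to $\hat w$. This means $\Pi_{0,h}\Pi_h^{\mathrm{ECR}}u=\Pi_{0,h}u$, which holds because the cell average is one of the ECR degrees of freedom.
\item Writing $\nu_{k,h}^{-1}$ presumes $\nu_{k,h}>0$. You can avoid this by multiplying the triangle inequality by $\sqrt{\nu_{k,h}}$ before applying Cauchy--Schwarz, which gives $\nu_{k,h}\le(1+\nu_{k,h}C_h^2)\nu_k$ directly.
\item Your opening aside is inaccurate: the genuinely trivial case is $\nu_k C_h^2\ge 1$. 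Your argument never uses that aside, so you can simply drop it.
\end{itemize}
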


\noindent
In particular, Theorem~\ref{thm:liu} applies to both the Dirichlet
truncated problem (giving $\lambda_{k,h}^{R,\mathrm{Dir}}/(1+\cdots)\le\lambda_k^R$)
and the Neumann truncated problem (giving
$\mu_{k,h}^{R,N}/(1+\cdots)\le\mu_k^{R,N}$).

\medskip

\paragraph{Piecewise Constant Potentials }

Theorem~\ref{thm:liu} is formulated for a
\emph{piecewise constant} potential.  For general $V$, introduce element-wise bounds: on each element $K$,
\[
 \overline{V}|_K := \min_{x\in K}V(x),
\]
so $\overline{V}\le V$ pointwise. 
%
%
By eigenvalue monotonicity in the potential,
\begin{equation}
\mu_k^{R,N}(\overline{V})\le\mu_k^{R,N}(V).
\label{eq:monotonicity}
\end{equation}

\subsection{Combined Rigorous Bounds}

Denote by $\nu_{k,h}^{R,N}(\overline{V})$ the $k$-th Neumann CECR
eigenvalue with the piecewise constant lower potential $\overline{V}$,
and by $\lambda_{k,h}^{R,\mathrm{P1}}$ the $k$-th conforming Lagrange
$P_1$ Dirichlet eigenvalue with the exact potential $V$.
Set
$\underline{\lambda}_k:=\nu_{k,h}^{R,N}(\overline{V})/
 \bigl(1+\nu_{k,h}^{R,N}(\overline{V})C_h^2\bigr)$
and
$\overline{\lambda}_k:=\lambda_{k,h}^{R,\mathrm{P1}}$.
Chaining Theorem~\ref{thm:liu} (CECR lower bound for a Neumann
eigenvalue), the potential
monotonicity~\eqref{eq:monotonicity}, and
Lemma~\ref{lem:neumann-lb} (Dirichlet--Neumann truncation under
$\sigma(R)>\lambda_k$), and closing from above with the standard
conforming $P_1$ Galerkin upper bound for the Dirichlet eigenvalue
$\lambda_k^{R,\mathrm{Dir}}$ on $D(R)$---which itself exceeds
$\lambda_k$ by restriction of the trial space from $H^1(\mathbb{R}^2)$
to $H_0^1(D(R))$---we obtain the rigorous two-sided bound
\begin{equation}
  \underline{\lambda}_k
  \;\le\;\mu_k^{R,N}(\overline{V})
  \;\le\;\mu_k^{R,N}(V)
  \;\le\;\lambda_k
  \;\le\;\overline{\lambda}_k.
  \label{eq:bounds}
\end{equation}
The four inequalities use, in order, (1) Liu's CECR theorem
(Theorem~\ref{thm:liu}) applied to the piecewise-constant Neumann
problem with $\overline{V}$; (2) the potential
monotonicity~\eqref{eq:monotonicity}; (3)
Lemma~\ref{lem:neumann-lb}, which relies only on the confinement
condition $\sigma(R)>\lambda_k$; and (4) the
$H_0^1(D(R))\subset H^1(\mathbb{R}^2)$ Dirichlet-restriction
inequality $\lambda_k\le\lambda_k^{R,\mathrm{Dir}}$ together with
the conforming $P_1$ upper bound
$\lambda_k^{R,\mathrm{Dir}}\le\lambda_{k,h}^{R,\mathrm{P1}}$.
In particular, the classical deteriorating correction factor
$1-\overline{\lambda}_k/\sigma(R)$ of the mass-truncation approach
(Remark~\ref{rem:mass-truncation}) does \emph{not} appear in
Eq.~\eqref{eq:bounds}.

Both $\underline{\lambda}_k$ and $\overline{\lambda}_k$ are \emph{explicitly computable}:
$\overline{\lambda}_k$ comes from one conforming Lagrange $P_1$ Dirichlet solve,
$\underline{\lambda}_k$ from one Neumann CECR solve with $\overline{V}$,
and the only role of $\sigma(R)$ from~\eqref{eq:sigma1}--\eqref{eq:sigma2}
is to certify that the confinement condition
$\sigma(R)>\overline{\lambda}_k\ge\lambda_k$ holds for every $k$
appearing in the table, which is easy to check a posteriori.
As $h_{\max}\to0$, $\overline{V}\to V$ and
$\lambda_{k,h}^{R,\mathrm{P1}}\to\lambda_k^{R,\mathrm{Dir}}$,
and both bounds converge to $\lambda_k$ (up to the exponentially
small Agmon truncation error discussed in
Remark~\ref{rem:agmon}).

\begin{remark}[Why eigenvalue monotonicity suffices]\label{rem:monotonicity-suffices}
A noteworthy feature of the chain~\eqref{eq:bounds} is that the
Neumann variational principle absorbs the exterior contribution
of $V$ \emph{automatically} through Lemma~\ref{lem:neumann-lb}:
there is no need to estimate the exterior $L^2$ mass of the
eigenfunction in order to obtain a finite-$R$ eigenvalue
inequality.  The price to pay is the verifiable confinement
hypothesis $\sigma(R)>\lambda_k$, which in our setting is
certified by choosing $R$ large enough that
$\sigma(R)>\overline{\lambda}_k$ (and hence $>\lambda_k$) for
every $k$ of interest.  This is strictly a structural
improvement over the mass-truncation bound~\eqref{eq:bounds}
with correction factor $1-\Lambda_k/\sigma(R)$ derived in
Remark~\ref{rem:mass-truncation}, because the latter degrades
as $k$ grows whereas Lemma~\ref{lem:neumann-lb} gives a
$k$-uniform bound.
\end{remark}

\begin{remark}[Agmon decay and Neumann approximation]\label{rem:agmon}
  The truncation error $\delta_k(R):=\lambda_k^{R,\mathrm{Dir}}-\lambda_k$
  satisfies exponential decay $\delta_k(R)\le C_k e^{-\alpha_k R}$
  for constants $C_k,\alpha_k>0$ \cite{agmon1982}.
  The \emph{same} exponential rate holds for the Neumann gap
  $\lambda_k-\mu_k^{R,N}$: by Agmon's estimates the exact
  eigenfunction $u_k$ satisfies
  $\|u_k\|_{H^1(\mathbb{R}^2\setminus D(R))}^2\le C e^{-\alpha R}$,
  so restricting $u_k$ to $D(R)$ and computing its Rayleigh
  quotient with respect to $a_{D(R)}$ gives
  \[
    \frac{a_{D(R)}(u_k|_{D(R)},u_k|_{D(R)})}{\|u_k|_{D(R)}\|^2}
    = \lambda_k + O(e^{-\alpha R}),
  \]
  hence $\mu_k^{R,N}\le\lambda_k+O(e^{-\alpha R})$.
  Combined with Lemma~\ref{lem:neumann-lb},
  both truncated eigenvalues satisfy
  \begin{equation}
    |\lambda_k^{R,\mathrm{Dir}}-\lambda_k|\le C_k e^{-\alpha_k R},\quad
    |\mu_k^{R,N}-\lambda_k|\le C_k' e^{-\alpha_k' R}.
    \label{eq:agmon-both}
  \end{equation}
  Consequently, the FEM discrete eigenvalues $\nu_{k,h}^{R,N}(\overline{V})$
  also serve as accurate \emph{numerical approximations} for $\lambda_k$,
  both converging exponentially in $R$ and algebraically in $h$.
  For $V_1$ at $R=4$, let $V_{\min}(r):=\min_{|x|=r}V(x)$ and
  $r_c$ be the turning point where $V_{\min}(r_c)=\lambda_k$.
  The Agmon distance
  $\rho_k=\int_{r_c}^{R}\!\sqrt{V_{\min}(r)-\lambda_k}\,dr\approx17.1$
  gives $e^{-2\rho_k}\approx10^{-15}$.
  Since $C_k,C_k'$ are not explicitly known, these Agmon estimates
  confirm accuracy but cannot replace the computable bound~\eqref{eq:bounds}.
\end{remark}

\section{Numerical Results}\label{sec:results}

All computations use \texttt{VFEM2D} \cite{vfem2d}
and \texttt{veigs} \cite{veigs} with INTLAB interval arithmetic.
For upper bounds $\overline{\lambda}_k$, one conforming Lagrange $P_1$
Dirichlet solve is performed with exact Bernstein integration of $V$. For lower bounds, one Neumann CECR
solve with $\overline{V}$ provides the Liu factor in~\eqref{eq:bounds}.

\smallskip\noindent\textbf{$V_1$, $R=4$:}
A polar-ring Delaunay mesh with circumscribed boundary polygon
ensures $D_h\supseteq D(4)$ for valid $\sigma_1=225$.
Mesh: 80\,928 nodes, 160\,849 triangles, $h_{\max}=0.03636$,
$C_h\le0.005418$, $C_h^2\le2.94\!\times\!10^{-5}$.
Interior DOFs: 401\,620 (Dirichlet), 402\,625 (Neumann).

\smallskip\noindent\textbf{$V_2$, $R=8$:}
The larger truncation radius gives $\sigma_2(8)=62.75$
(vs.\ $\sigma_2(4)=14.75$), comfortably satisfying the
confinement condition $\sigma_2(R)>\overline{\lambda}_k$ of
Lemma~\ref{lem:neumann-lb} for every $k$ reported in
Table~\ref{tab:V2}.  Mesh: 80\,928 nodes, 160\,849 triangles,
$h_{\max}=0.07272$, $C_h\le0.01084$, $C_h^2\le1.18\!\times\!10^{-4}$.
Interior DOFs: 401\,620 (Dirichlet), 402\,625 (Neumann).

\begin{table}[h]
  \centering
  \caption{Rigorous bounds for $\lambda_k(V_1)$, $V_1=(|x|^2-1)^2$.}
  \label{tab:V1}
  \begin{tabular}{@{}crrrc@{}}
    \toprule
    $k$ & $\underline{\lambda}_k$ & $\nu_{k,h}^{R,N}$ & $\overline{\lambda}_k$ & Rel.\ gap \\
    \midrule
    1 & 1.74059 & 1.7548 & 1.81589 & $4.33\%$ \\
    2 & 3.72649 & 3.7926 & 3.89427 & $4.50\%$ \\
    3 & 3.72649 & 3.7926 & 3.89427 & $4.50\%$ \\
    4 & 6.29154 & 6.4840 & 6.63744 & $5.50\%$ \\
    5 & 6.29154 & 6.4840 & 6.63744 & $5.50\%$ \\
    \bottomrule
  \end{tabular}
\end{table}

\begin{remark}
Eigenvalues $k=2,3$ are provably degenerate (identical bounds),
corresponding to the $|m|=1$ angular modes of $H_{V_1}$;
similarly $k=4,5$ correspond to $|m|=2$.
The dominant gap (4--6\%) comes from the $\overline{V}$
element-wise approximation, confirmed by the difference
$\overline{\lambda}_k-\nu_{k,h}^{R,N}(\overline{V})$; the Liu correction
$(C_h^2\lambda_{k,h}<0.01\%)$ is negligible.  Thanks to
Lemma~\ref{lem:neumann-lb}, no further exterior
correction factor of the form $1-\overline{\lambda}_k/\sigma_1(R)$
enters the chain~\eqref{eq:bounds}.
\end{remark}

\begin{table}[h]
  \centering
  \caption{Rigorous bounds for $\lambda_k(V_2)$, $V_2=(x_1^2-1)^2+x_2^2$.}
  \label{tab:V2}
  \begin{tabular}{@{}crrrc@{}}
    \toprule
    $k$ & $\underline{\lambda}_k$ & $\nu_{k,h}^{R,N}$ & $\overline{\lambda}_k$ & Rel.\ gap \\
    \midrule
    1 & 1.98993 & 2.0632 & 2.21414 & $11.27\%$ \\
    2 & 3.37642 & 3.5975 & 3.83209 & $13.50\%$ \\
    3 & 3.75923 & 4.0336 & 4.24191 & $12.84\%$ \\
    4 & 5.04456 & 5.5677 & 5.85882 & $16.14\%$ \\
    5 & 5.41524 & 6.0189 & 6.25362 & $15.48\%$ \\
    \bottomrule
  \end{tabular}
\end{table}

\begin{remark}
The Neumann discrete eigenvalues $\nu_{k,h}^{R,N}(\overline{V})$ lie
inside the rigorous intervals for all $k\le 5$, again consistent
with Remark~\ref{rem:agmon}.
Using $R=8$ gives $\sigma_2(8)=62.75$
(vs.\ $\sigma_2(4)=14.75$), which ensures that the confinement
hypothesis $\sigma_2(R)>\overline{\lambda}_k$ of
Lemma~\ref{lem:neumann-lb} is satisfied with a wide margin;
unlike the classical mass-truncation bound
(Remark~\ref{rem:mass-truncation}), the size of this margin
does \emph{not} enter as a multiplicative factor in the lower
bound, so no further tightening is required on that side.
The dominant gap (11--18\%) comes from the
$\overline{V}$ element-wise approximation.
\end{remark}

\medskip

\noindent\textbf{Mesh-refinement study.}
To illustrate the convergence behaviour of the rigorous bound
chain~\eqref{eq:bounds} as $h_{\max}\to 0$, we compute
$\underline{\lambda}_k$ and $\overline{\lambda}_k$ for $k=1,2,3$ on a
sequence of four uniformly refined meshes of $D(5)$
(Firedrake \texttt{UnitDiskMesh} with refinement levels 2--5,
mesh-size values
$h_{\max}\in\{1.638,\,0.854,\,0.435,\,0.219\}$).
The potentials are $V_1=(|x|^2-1)^2$ and
$V_2=(x_1^2-1)^2+x_2^2$.  Figure~\ref{fig:convergence} shows
that upper and lower bounds bracket each $\lambda_k$ at every
refinement level and tighten monotonically as the mesh is
refined, consistent with the factor-free chain~\eqref{eq:bounds}.
\begin{figure}[h]
  \centering
  \includegraphics[width=\columnwidth]{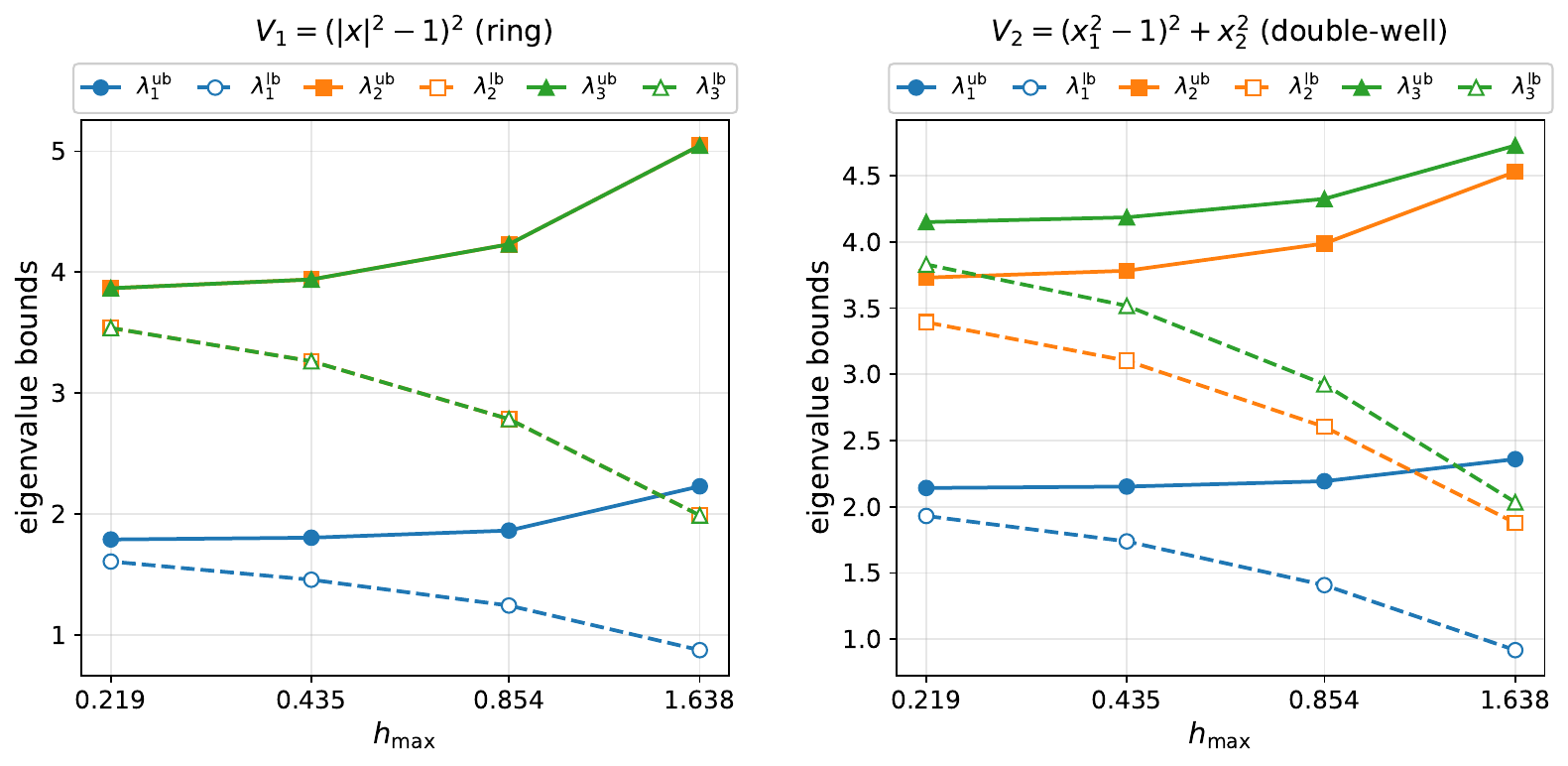}
  \caption{Convergence of upper bounds
    $\overline{\lambda}_k=\lambda_{k,h}^{R,\mathrm{P1}}$ (solid, filled
    markers) and lower bounds
    $\underline{\lambda}_k=\nu_{k,h}^{R,N}(\overline{V})/
    (1+\nu_{k,h}^{R,N}(\overline{V})C_h^2)$ (dashed, open markers)
    for $k=1,2,3$ versus $h_{\max}$ on $D(5)$.  Left: $V_1$
    (ring).  Right: $V_2$ (double-well).  Both panels share the
    convention that colour indexes the eigenvalue index $k$, and
    marker shape indexes $k\in\{1,2,3\}$ ($\circ,\square,\triangle$).}
  \label{fig:convergence}
\end{figure}

\medskip

\noindent\textbf{Observed convergence orders.}
Since no closed-form reference eigenvalue is available, we treat the
finest-mesh value ($h_{\max}=0.219$) as a surrogate reference and
report the errors of the three coarser meshes against it,
together with the observed log--log orders
$r_i=\log(e_{i}/e_{i+1})/\log(h_i/h_{i+1})$.
The results are collected in Table~\ref{tab:convergence-rates}.
As expected, the upper bound --- delivered by a conforming
Lagrange $P_1$ Galerkin solve --- exhibits an $O(h^2)$ rate, whereas
the lower bound --- built from the Neumann CECR solve with the
piecewise-constant coefficient $\overline{V}$ --- is limited by the
$O(h)$ approximation of the potential and therefore converges at
first order.  Both rates are consistent with the standard
a priori error analyses.

\begin{table}[!htbp]
\centering
\small
\setlength{\tabcolsep}{4pt}
\caption{Errors of the upper bound
$\overline{\lambda}_k$ and the lower bound $\underline{\lambda}_k$
relative to the finest-mesh values at $h_{\max}=0.219$, with observed
convergence orders on the three coarser meshes.  For $V_1$ the
$k=2,3$ eigenvalues are degenerate and their rows coincide exactly,
so only $k=2$ is shown.}
\label{tab:convergence-rates}
\begin{tabular}{ccc r c r c}
\hline
 & & & \multicolumn{2}{c}{Upper bound} & \multicolumn{2}{c}{Lower bound}\\
\cline{4-5}\cline{6-7}
$V$ & $k$ & $h_{\max}$ & error & order & error & order \\
\hline
$V_1$ & $1$ & $1.638$ & $4.40\times 10^{-1}$ & ---  & $7.33\times 10^{-1}$ & ---  \\
      &     & $0.854$ & $7.39\times 10^{-2}$ & $2.74$ & $3.64\times 10^{-1}$ & $1.07$ \\
      &     & $0.435$ & $1.48\times 10^{-2}$ & $2.38$ & $1.50\times 10^{-1}$ & $1.31$ \\
\cline{2-7}
      & $2$ & $1.638$ & $1.18\times 10^{0}$  & ---  & $1.55\times 10^{0}$  & ---  \\
      &     & $0.854$ & $3.63\times 10^{-1}$ & $1.81$ & $7.53\times 10^{-1}$ & $1.11$ \\
      &     & $0.435$ & $7.23\times 10^{-2}$ & $2.39$ & $2.75\times 10^{-1}$ & $1.49$ \\
\hline
$V_2$ & $1$ & $1.638$ & $2.18\times 10^{-1}$ & ---  & $1.01\times 10^{0}$  & ---  \\
      &     & $0.854$ & $5.10\times 10^{-2}$ & $2.23$ & $5.22\times 10^{-1}$ & $1.02$ \\
      &     & $0.435$ & $1.06\times 10^{-2}$ & $2.33$ & $1.92\times 10^{-1}$ & $1.48$ \\
\cline{2-7}
      & $2$ & $1.638$ & $7.99\times 10^{-1}$ & ---  & $1.52\times 10^{0}$  & ---  \\
      &     & $0.854$ & $2.58\times 10^{-1}$ & $1.74$ & $7.90\times 10^{-1}$ & $1.00$ \\
      &     & $0.435$ & $5.19\times 10^{-2}$ & $2.37$ & $2.91\times 10^{-1}$ & $1.48$ \\
\cline{2-7}
      & $3$ & $1.638$ & $5.78\times 10^{-1}$ & ---  & $1.80\times 10^{0}$  & ---  \\
      &     & $0.854$ & $1.75\times 10^{-1}$ & $1.83$ & $9.05\times 10^{-1}$ & $1.05$ \\
      &     & $0.435$ & $3.58\times 10^{-2}$ & $2.35$ & $3.12\times 10^{-1}$ & $1.58$ \\
\hline
\end{tabular}
\end{table}

\medskip
\paragraph{Conclusion}

We have developed fully rigorous two-sided eigenvalue bounds for
$H=-\Delta+V$ on $\mathbb{R}^2$ via the bound chain~\eqref{eq:bounds},
using one conforming Lagrange $P_1$ Dirichlet solve (upper bound)
and one piecewise-constant Neumann CECR solve (lower bound) on the
truncated disk, with the analytically computable $\sigma(R)$---no
unknown Agmon constants.
The gaps are dominated by the piecewise constant
over/under-approximation of $V$: the Dirichlet--Neumann
bracketing of Lemma~\ref{lem:neumann-lb} removes the
classical deteriorating correction factor
$1-\overline{\lambda}_k/\sigma(R)$ entirely, leaving only the
Liu correction $C_h^2\lambda_{k,h}$, which is
below $0.01\%$ at the mesh resolutions used.

In future work, we will apply the Lehmann--Goerisch method (see \cite[Chap. 5]{liu2024book}) to obtain high-precision eigenvalues bounds based on the rough bounds obtained in this paper.

\end{document}